\newtheorem{theorem}{Theorem}[section]
\newtheorem{lemma}[theorem]{Lemma}
\numberwithin{equation}{section}
\begin{document}
\title[ ]{Erratum to \textquotedblleft Cobordisms of maps with singularities of given
class\textquotedblright}
\author{YOSHIFUMI ANDO}
\address{Department of Mathematical Sciences, Faculty of Science, Yamaguchi University,
Yamaguchi, 753-8512, Japan}
\email{andoy@yamaguchi-u.ac.jp}
\thanks{$2000$ \textit{Mathematics Subject Classification.} $57R45, 57R90, 58A20$}
\keywords{Singularity, map, cobordism, stable homotopy}

\begin{abstract}
We correct the proof in the unoriented case of Theorem 1.2 in the paper
\textquotedblleft Cobordisms of maps with singularities of given
class\textquotedblright.

\end{abstract}
\maketitle

Andras Sz\"{u}cs pointed out that the proof of Theorem 1.2 in the unoriented
case is false. This is caused by a wrong assertion that any element of
$\mathfrak{N}(n,P;\boldsymbol{\Omega,\Omega}_{\mathcal{K}})$ is of order two.
For example, it has been proved in \cite{wells} that there can be elements of
infinite order in the cobordism groups of immersions.

Let $X$ be a connected space with a simplicial complex structure. A path of a
point $x$ to a point $y$ in $X$ is denoted such as $\alpha_{xy}$\ or
$\beta_{xy}$. The inverse of a path $\alpha_{xy}$\ is denoted by $\alpha
_{xy}^{-1}$. We take a base point $e$\ of $X$. Let $w:H_{1}%
(X;\mathbb{Z)\rightarrow\{\pm}1\mathbb{\}}$\ denote an epimorphism. The
homology class of a loop $\alpha_{xx}$\ in $H_{1}(X;\mathbb{Z)}$\ is denoted
by $[\alpha_{xx}]$. Let $p:\widetilde{X}\rightarrow X$\ denote a double
covering associated to $w$. The space $\widetilde{X}$\ is defined to be a
space consisting of all pairs $(x,\alpha_{xe})$\ under the equivalence
relation such that $(x,\alpha_{xe})$ and $(x,\beta_{xe})$\ are equivalent if
and only if $w([\alpha_{xe}^{-1}\beta_{xe}])=1$. Let $\alpha_{xe}^{\prime}%
$\ denote a path such that $w([\alpha_{xe}^{-1}\alpha_{xe}^{\prime}])=-1$.
Then $p^{-1}(x)$\ consists of $(x,\alpha_{xe})$\ and $(x,\alpha_{xe}^{\prime
})$. Let us recall the local coefficients $\mathcal{Z}=\{\mathbb{Z}_{x}%
\}$\ over $X$\ associated to the covering $p:\widetilde{X}\rightarrow X$
introduced in \cite{ste}, where we have\ an isomorphism $(\alpha_{xe}%
)_{\#}:\mathbb{Z}_{x}\rightarrow\mathbb{Z}_{e}$\ with required properties. Let
us consider the local coefficients $\mathcal{F}=\{H_{0}(p^{-1}(x);\mathbb{Z}%
)\}$ over $X$ Here, an element of $H_{0}(p^{-1}(x);\mathbb{Z})$\ is expressed
as $\{s,t\}$, where $s\in H_{0}((x,\alpha_{xe});\mathbb{Z})$, $t\in
H_{0}((x,\alpha_{xe}^{\prime});\mathbb{Z})$, and these two $0$-th homology
groups are canonicaly isomorphic to $\mathbb{Z=Z}_{e}$. However, we cannot
give a specified order to these two elements. We note that there exists the
isomorphisms%
\[
(\alpha_{xe})_{\#}:\mathbb{Z}_{x}\rightarrow\mathbb{Z}_{e}\text{ \ \ \ and
\ \ \ }(\alpha_{xe}^{\prime})_{\#}:\mathbb{Z}_{x}\rightarrow\mathbb{Z}_{e}.
\]
Let $\mathfrak{i}:X\times\mathbb{Z\rightarrow}\mathcal{F}$\ denote a
monomorphism defined by $\mathfrak{i}(x,s)=\{s,s\}$, which lies in
$H_{0}(p^{-1}(x);\mathbb{Z})$. Let $\mathfrak{j:}\mathcal{F}\rightarrow
\mathcal{Z}$\ denote an epimorphism defined by $\mathfrak{j}(\{s,t\})=(\alpha
_{xe})_{\#}^{-1}(s)+(\alpha_{xe}^{\prime})_{\#}^{-1}(t)\in\mathbb{Z}_{x}$.
This is well defined. Then it is not difficult to see that the sequence of the
local coefficients%
\[
0\rightarrow X\times\mathbb{Z}\rightarrow\mathcal{F}\rightarrow\mathcal{Z}%
\rightarrow0
\]
is exact. Indeed, $(\alpha_{xe})_{\#}^{-1}(s)+(\alpha_{xe}^{\prime})_{\#}%
^{-1}(t)=0$ if and only if $s=t$. Therefore, we have the exact sequence%
\begin{equation}
\rightarrow H_{i+1}(X,Y;\mathcal{Z}\mathbb{)}\overset{\partial}{\rightarrow
}H_{i}(X,Y;X\times\mathbb{Z})\rightarrow H_{i}(X,Y;\mathcal{F})\rightarrow
H_{i}(X,Y;\mathcal{Z}\mathbb{)}\overset{\partial}{\rightarrow}%
\end{equation}
for a pair $(X,Y)$, where $Y$ is a subcomplex.

The following lemma may be well-known, though we give an elementary proof for completeness.

\begin{lemma}
Let $(X,Y)$\ be as above and $\widetilde{Y}=p^{-1}(Y)$. Then $H_{\ast
}(\widetilde{X},\widetilde{Y};\mathbb{Z)}$ is canonically\ isomorphic
to\ $H_{\ast}(X,Y;\mathcal{F)}$.
\end{lemma}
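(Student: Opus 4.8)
The plan is to produce an explicit chain-level isomorphism coming from the triangulation, in the spirit of the ``elementary'' proof promised above. First I would lift the simplicial structure. Since each closed simplex $\sigma$ of $X$ is contractible, hence simply connected, the double covering $p\colon\widetilde X\to X$ is trivial over $\sigma$, so $p^{-1}(\sigma)$ is the disjoint union of exactly two simplices $\widetilde\sigma',\widetilde\sigma''$, each carried homeomorphically (affinely) onto $\sigma$ by $p$; this is compatible with passage to faces, so $\widetilde X$ becomes a simplicial complex and $\widetilde Y=p^{-1}(Y)$ a subcomplex. Hence $C_i(\widetilde X,\widetilde Y;\mathbb Z)$ is the free abelian group on the lifts of those ordered $i$-simplices $\sigma$ of $X$ that do not lie in $Y$. (Note also that $w$ factors through $H_1(X;\mathbb Z)$, so $p$ is a regular cover with deck group $\mathbb Z/2$; this will be used only in the remark at the end.)

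Next I would recall a standard simplicial model for homology with local coefficients: $C_i(X,Y;\mathcal F)=\bigoplus_{\sigma}\mathcal F_{v(\sigma)}$, the sum over $i$-simplices $\sigma\not\subset Y$, where $v(\sigma)$ is the leading vertex of $\sigma$ and $\mathcal F_{v(\sigma)}=H_0(p^{-1}(v(\sigma));\mathbb Z)$, with boundary $\partial(\sigma\otimes g)=\sum_{j\ge 1}(-1)^j(\partial_j\sigma)\otimes g+(\partial_0\sigma)\otimes\tau_\sigma(g)$, where $\tau_\sigma$ is the transport in $\mathcal F$ along the edge of $\sigma$ from $v(\sigma)$ to $v(\partial_0\sigma)$. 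Because $p^{-1}(v(\sigma))$ consists of exactly two points, $H_0(p^{-1}(v(\sigma));\mathbb Z)$ is \emph{canonically} the free abelian group on those two points. I would therefore define $\Phi\colon C_i(\widetilde X,\widetilde Y;\mathbb Z)\to C_i(X,Y;\mathcal F)$ by sending a lift $\widetilde\sigma$ to $\sigma\otimes[\,\widetilde v(\sigma)\,]$, where $\widetilde v(\sigma)\in p^{-1}(v(\sigma))$ is the leading vertex of $\widetilde\sigma$ and $[\,\widetilde v(\sigma)\,]$ is the corresponding generator of $H_0(p^{-1}(v(\sigma));\mathbb Z)$. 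Since the two lifts of $\sigma$ have the two distinct points of $p^{-1}(v(\sigma))$ as leading vertices, $\Phi$ is an isomorphism of graded abelian groups.

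The heart of the proof is that $\Phi$ is a chain map, and this is where the care lies. The $j$-th face of a lift $\widetilde\sigma$ is a lift of $\partial_j\sigma$, and its leading vertex is the endpoint of the unique lift, starting at $\widetilde v(\sigma)$, of the edge of $\sigma$ from $v(\sigma)$ to $v(\partial_j\sigma)$. For $j\ge 1$ that edge is constant, so the leading vertex of $\partial_j\widetilde\sigma$ is again $\widetilde v(\sigma)$ and no twist occurs, matching the untwisted terms on the $\mathcal F$-side; for $j=0$ the edge is precisely the one realizing $\tau_\sigma$, so the leading vertex of $\partial_0\widetilde\sigma$ is the fibre point $\tau_\sigma^{\mathrm{geom}}(\widetilde v(\sigma))$ obtained by path lifting, and since parallel transport in $\mathcal F$ is by definition the induced permutation of fibre points one has $\tau_\sigma[\,\widetilde v(\sigma)\,]=[\,\tau_\sigma^{\mathrm{geom}}\widetilde v(\sigma)\,]$, so the $j=0$ terms agree as well. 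With the common signs $(-1)^j$ this gives $\Phi\partial=\partial\Phi$, hence a canonical isomorphism $H_*(\widetilde X,\widetilde Y;\mathbb Z)\cong H_*(X,Y;\mathcal F)$; it is canonical because the only auxiliary data are those built into the simplicial model, which disappear on homology. As a check one may instead pass to the universal cover $\widehat X$: the stalk of $\mathcal F$ is the group ring $\mathbb Z[\mathbb Z/2]$ with $\pi_1X$ acting through $w$, so both sides are identified with the homology of $\mathbb Z[\mathbb Z/2]\otimes_{\mathbb Z[\pi_1X]}C_*(\widehat X,\widehat Y)=C_*(\widetilde X,\widetilde Y)$, which makes the isomorphism manifestly independent of the triangulation.

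I expect the only real obstacle to be bookkeeping: pinning down the precise conventions in the definition of homology with local coefficients — the choice of leading-vertex basepoint in each simplex and the resulting asymmetry between $\partial_0$ and the higher faces — so that the transport $\tau_\sigma$ lines up on the nose with face lifting. Once these conventions are fixed consistently (and one records that $p^{-1}(x)$ has exactly two points, so that $H_0(p^{-1}(x);\mathbb Z)$ is free on them), the verification of $\Phi\partial=\partial\Phi$ is routine.
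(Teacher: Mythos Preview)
Your argument is correct and is essentially the paper's own proof: both lift the triangulation to $\widetilde X$, identify $\mathcal F_{v(\sigma)}=H_0(p^{-1}(v(\sigma));\mathbb Z)$ with the free abelian group on the two lifts of the leading vertex, define the obvious chain isomorphism matching a lift $\widetilde\sigma$ with $\sigma$ weighted by the generator for $\widetilde v(\sigma)$, and check compatibility with $\partial$ by noting that only the $0$th face picks up the edge transport. The only cosmetic differences are that the paper writes the map in the direction $C_j(X;\mathcal F)\to C_j(\widetilde X;\mathbb Z)$ and works in the absolute case, and that your closing remark via the universal cover and $\mathbb Z[\mathbb Z/2]\otimes_{\mathbb Z[\pi_1X]}C_*(\widehat X)$ is an extra sanity check not present there.
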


\begin{proof}
We provide $\widetilde{X}$ and $\widetilde{Y}$\ with simplicial complex
structures induced from $X$ and $Y$.\ A $j$-simplex $|p_{0}\cdots p_{j}|$
of\ $X$\ constitutes the two simplexes in $p^{-1}(|p_{0}\cdots p_{j}|)$, which
we denote by $(|p_{0}\cdots p_{j}|,\alpha_{p_{0}e})$ and $(|p_{0}\cdots
p_{j}|,\alpha_{p_{0}e}^{\prime})$. This notation will be reasonable. Let $J$
denote the set consisting of all $j$-simplexes in $X$. Let us define a chain
isomorphism%
\[
\phi_{j}:C_{j}(X;\mathcal{F})\rightarrow C_{j}(\widetilde{X};\mathbb{Z)}%
\]
as follows. Let $s\in H_{0}((x,\alpha_{p_{0}e});\mathbb{Z})$\ and $t\in
H_{0}((x,\alpha_{p_{0}e}^{\prime});\mathbb{Z})$.\ An element of $C_{j}%
(X;\mathcal{F})$\ is written as
\[
\sum_{|p_{0}\cdots p_{j}|\in J}(s+t)|p_{0}\cdots p_{j}|,
\]
where $s+t$\ is zero except for finite $j$-simplexes. Let $\phi_{j}$ map it
onto the following element of $C_{j}(\widetilde{X};\mathbb{Z)}$.%
\[
\sum_{|p_{0}\cdots p_{j}|\in J}s(|p_{0}\cdots p_{j}|,\alpha_{p_{0}e}%
)+t(|p_{0}\cdots p_{j}|,\alpha_{p_{0}e}^{\prime}).
\]
It is not difficult to see that $\phi_{j}$ is an isomorphism.

Next we show that $\phi_{j}$ is compatible with the boundary operators. Let
$\overline{p_{0}p_{1}}$ be a segment in $|p_{0}\cdots p_{j}|$ and let
$(\overline{p_{0}p_{1}})_{\#}:H_{0}(p^{-1}(p_{0});\mathbb{Z})\rightarrow
H_{0}(p^{-1}(p_{1});\mathbb{Z})$ be the canonical isomorohism. Then we have%
\begin{align*}
&  \phi_{j-1}\circ\partial((s+t)|p_{0}\cdots p_{j}|)\\
&  =\phi_{j-1}\{(\overline{p_{0}p_{1}})_{\#}(s+t)|p_{1}\cdots p_{j}%
|+\sum_{m=1}^{j}(-1)^{m}(s+t)|p_{0}\cdots p_{m-1}p_{m+1}\cdots p_{j}|\}\\
&  =((\overline{p_{0}p_{1}})_{\#}(s))((|p_{1}\cdots p_{j}|,\alpha_{p_{1}%
e})+((\overline{p_{0}p_{1}})_{\#}(t))((|p_{1}\cdots p_{j}|,\alpha_{p_{1}%
e}^{\prime})\}\\
&  \text{ \ \ \ \ \ \ \ \ \ \ }+\sum_{m=1}^{j}(-1)^{m}\{s(|p_{0}\cdots
p_{m-1}p_{m+1}\cdots p_{j}|,\alpha_{p_{0}e})\\
&  \text{
\ \ \ \ \ \ \ \ \ \ \ \ \ \ \ \ \ \ \ \ \ \ \ \ \ \ \ \ \ \ \ \ \ \ \ \ \ }%
+t(|p_{0}\cdots p_{m-1}p_{m+1}\cdots p_{j}|,\alpha_{p_{0}e}^{\prime})\}\\
&  =\partial\left\{  \sum_{|p_{0}\cdots p_{j}|\in J}s(|p_{0}\cdots
p_{j}|,\alpha_{p_{0}e})+t(|p_{0}\cdots p_{j}|,\alpha_{p_{0}e}^{\prime
})\right\}  \\
&  =\partial\circ\phi_{j}((s+t)|p_{0}\cdots p_{j}|)
\end{align*}
This proves the assertion.
\end{proof}

Now we are ready to prove Theorem 1.2 in the unoriented case.

\begin{proof}
[Proof of Theorem 1.2 in the unoriented case.]Let $\ell\gg n,p$. Let
$\widehat{\gamma}_{\boldsymbol{\Omega}}^{\ell}$ and $\widehat{\gamma
}_{\boldsymbol{\Omega}_{\mathcal{K}}^{1}}^{\ell}$ be the vector bundles over
the spaces $\boldsymbol{\Omega}$ and $\boldsymbol{\Omega}_{\mathcal{K}}^{1}$
given in \cite[Introduction]{ando} and let $(\widehat{\gamma}%
_{\boldsymbol{\Omega}}^{\ell})_{x}$ and $(\widehat{\gamma}_{\boldsymbol{\Omega
}_{\mathcal{K}}^{1}}^{\ell})_{x}$ be their fibers over a point $x$
respectively. Let
\begin{align*}
\mathcal{Z}(w^{\boldsymbol{\Omega}})  &  =\left\{  H_{\ell}\left(  \left(
\widehat{\gamma}_{\boldsymbol{\Omega}}^{\ell}\right)  _{x},\left(
\widehat{\gamma}_{\boldsymbol{\Omega}}^{\ell}\right)  _{x}\backslash\left\{
0\right\}  ;\mathbb{Z}\right)  \right\}  _{x\in\boldsymbol{\Omega}},\\
\mathcal{Z}(w^{\boldsymbol{\Omega}_{\mathcal{K}}^{1}})  &  =\left\{  H_{\ell
}\left(  \left(  \widehat{\gamma}_{\boldsymbol{\Omega}_{\mathcal{K}}^{1}%
}^{\ell}\right)  _{x},\left(  \widehat{\gamma}_{\boldsymbol{\Omega
}_{\mathcal{K}}^{1}}^{\ell}\right)  _{x}\backslash\left\{  0\right\}
;\mathbb{Z}\right)  \right\}  _{x\in\boldsymbol{\Omega}_{\mathcal{K}}^{1}}%
\end{align*}
denote the local coefficient systems associated to the first Stiefel-Whitney
classes of the grassmann manifolds $G_{n,\ell}$ and $G_{n,\ell+1}$ in
$\boldsymbol{\Omega}$ and $\boldsymbol{\Omega}_{\mathcal{K}}^{1}$
respectively. It follows from \cite[Theorem 1.2]{thom} that
\begin{align*}
H_{i+\ell}\left(  T\left(  \widehat{\gamma}_{\boldsymbol{\Omega}}^{\ell
}\right)  ;\mathbb{Z}\right)   &  \approx H_{i}(\boldsymbol{\Omega
};\mathcal{Z}(w^{\boldsymbol{\Omega}})),\\
H_{i+\ell}\left(  T\left(  \widehat{\gamma}_{\boldsymbol{\Omega}_{\mathcal{K}%
}^{1}}^{\ell}\right)  ;\mathbb{Z}\right)   &  \approx H_{i}(\boldsymbol{\Omega
}_{\mathcal{K}}^{1};\mathcal{Z}(w^{\boldsymbol{\Omega}_{\mathcal{K}}^{1}})).
\end{align*}
Since $T(\widehat{\gamma}_{\boldsymbol{\Omega}}^{\ell})$ and $T(\widehat
{\gamma}_{\boldsymbol{\Omega}_{\mathcal{K}}^{1}}^{\ell})$ are simply
connected, it is sufficient for Theorem 1.2 to prove that
\[
(\Delta^{(\Omega,\Omega_{\mathcal{K}})})_{\ast}:H_{i}(\boldsymbol{\Omega
};\mathcal{Z}(w^{\boldsymbol{\Omega}}))\rightarrow H_{i}(\boldsymbol{\Omega
}_{\mathcal{K}}^{1};\mathcal{Z}(w^{\boldsymbol{\Omega}_{\mathcal{K}}^{1}}))
\]
is an isomorphism for $0\leqq i<n$ and an epimorphism for $i=n$.

By applying the mapping cylinder of $\Delta^{(\Omega,\Omega_{\mathcal{K}})}$,
we may suppose that $\boldsymbol{\Omega}$ is a subspace of $\boldsymbol{\Omega
}_{\mathcal{K}}^{1}$, $\Delta^{(\Omega,\Omega_{\mathcal{K}})}$\ is an
inclusion, and that $(\Delta^{(\Omega,\Omega_{\mathcal{K}})})^{\ast
}(\mathcal{Z}(w^{\boldsymbol{\Omega}_{\mathcal{K}}^{1}}))$ is isomorphic to
$\mathcal{Z}(w^{\boldsymbol{\Omega}})$. By considering relative homology
groups the above assertion for $(\Delta^{(\Omega,\Omega_{\mathcal{K}})}%
)_{\ast}$ is equivalent to the assertion that
\begin{equation}
H_{i}(\boldsymbol{\Omega}_{\mathcal{K}}^{1},\boldsymbol{\Omega};\mathcal{Z}%
(w^{\boldsymbol{\Omega}_{\mathcal{K}}^{1}}))
\end{equation}
vanishes for $0\leqq i\leqq n$.

Recall $\pi_{1}(G_{n,\ell})=\pi_{1}(G_{n,\ell+1})=\{\pm1\}$. Let
$p_{\boldsymbol{\Omega}_{\mathcal{K}}^{1}}:(\widetilde{\boldsymbol{\Omega}%
}_{\mathcal{K}}^{1},\widetilde{\boldsymbol{\Omega}})\rightarrow
(\boldsymbol{\Omega}_{\mathcal{K}}^{1},\boldsymbol{\Omega)}$ denote the double
covering of $\boldsymbol{\Omega}$ and $\boldsymbol{\Omega}_{\mathcal{K}}^{1}$
associated to the first Stiefel-Whitney classes of the grassmann manifolds
$G_{n,\ell}$ and $G_{n,\ell+1}$ respectively. Let $\mathcal{F}%
(\boldsymbol{\Omega}_{\mathcal{K}}^{1})$\ denote the local coefficients
$\{H_{0}((p_{\boldsymbol{\Omega}_{\mathcal{K}}^{1}})^{-1}(x);\mathbb{Z})\}$ as
above. It is enough for the assertion concerning (0.2) to prove that
$H_{i}(\boldsymbol{\Omega}_{\mathcal{K}}^{1},\boldsymbol{\Omega};\mathbb{Z})$
and $H_{i}(\boldsymbol{\Omega}_{\mathcal{K}}^{1},\boldsymbol{\Omega
};\mathcal{F}(\boldsymbol{\Omega}_{\mathcal{K}}^{1}))$ vanish for $0\leqq
i\leqq n$ by the exact sequence in (0.1).

On the otherhand, we have by Lemma 0.1 that
\[
H_{i}(\boldsymbol{\Omega}_{\mathcal{K}}^{1},\boldsymbol{\Omega};\mathcal{F}%
(\boldsymbol{\Omega}_{\mathcal{K}}^{1}))\approx H_{i}(\widetilde
{\boldsymbol{\Omega}}_{\mathcal{K}}^{1},\widetilde{\boldsymbol{\Omega}%
};\mathbb{Z}).
\]
Hence, we show that $H_{i}(\boldsymbol{\Omega}_{\mathcal{K}}^{1}%
,\boldsymbol{\Omega};\mathbb{Z})$ and $H_{i}(\widetilde{\boldsymbol{\Omega}%
}_{\mathcal{K}}^{1},\widetilde{\boldsymbol{\Omega}};\mathbb{Z})$ vanish for
$0\leqq i\leqq n$. In fact, it follows from \cite[Lemma7.2]{ando} that
$(\Delta^{(\Omega,\Omega_{\mathcal{K}})})_{\ast}:\pi_{i}(\boldsymbol{\Omega
})\rightarrow\pi_{i}(\boldsymbol{\Omega}_{\mathcal{K}}^{1})$ is an isomorphism
for $0\leqq i<n$ and an epimorphism for $i=n$. Namely, both of $\pi
_{i}(\boldsymbol{\Omega}_{\mathcal{K}}^{1},\boldsymbol{\Omega})$ and $\pi
_{i}(\widetilde{\boldsymbol{\Omega}}_{\mathcal{K}}^{1},\widetilde
{\boldsymbol{\Omega}})$ vanish for $0\leqq i\leqq n$, and so do $H_{i}%
(\boldsymbol{\Omega}_{\mathcal{K}}^{1},\boldsymbol{\Omega};\mathbb{Z})$ and
$H_{i}(\widetilde{\boldsymbol{\Omega}}_{\mathcal{K}}^{1},\widetilde
{\boldsymbol{\Omega}};\mathbb{Z})$. This proves Theorem 1.2.
\end{proof}

In the proof of \cite[Proposition 10.3]{ando} the phrase \textquotedblleft the
similar argument as in the proof of Theorem 1.2\textquotedblright\ should be
read in the context of this note.

The author would like to express his sincere gratitude to Professors Andras Sz\"{u}cs.


\begin{thebibliography}{9}                                                                                                %


\bibitem {ando}Y. Ando, Cobordisms of maps with singularities of given class,
Algebr. Geom. Topol. 8(2008), 1989-2029.

\bibitem {ste}N. E. Steenrod, Homology with local coefficients, Ann. of Math.
44(1943), 610-627.

\bibitem {thom}R. Thom, Espaces fibr\'{e}s en sph\`{e}res et carr\'{e}s de
Steenrod, Ann. Sci. \'{E}cole. Norm. Sup. (3)\ 69(1952), 109-182.

\bibitem {wells}R. Wells, Cobordism groups of immersions, Topology 5(1966), 281-294.
\end{thebibliography}
\end{document}